\documentclass[leqno,12pt]{amsart}
\usepackage{latexsym}
\usepackage{amssymb,amsmath,amsfonts}
\usepackage{mathtools}
\usepackage{mathrsfs}
\usepackage[utf8]{inputenc}
\usepackage[usenames,dvipsnames]{color}

\def\R{\mathbb{R}}

\def\N{\mathbb{N}}

\newtheorem{theorem}{Theorem}[section]
\newtheorem{lemma}[theorem]{Lemma}
\newtheorem{proposition}[theorem]{Proposition}

\theoremstyle{definition}
\newtheorem{definition}[theorem]{Definition}

\theoremstyle{remark}
\newtheorem{remark}[theorem]{Remark}
\numberwithin{equation}{section}

\newcommand{\E}{\mathfrak{E}}
\newcommand{\C}{\mathscr{C}}

\newcommand{\Fix}{\operatorname{Fix}}
\newcommand{\Id}{\operatorname{Id}}

\usepackage[left=2.cm,right=2.cm,top=1.8cm,bottom=1.8cm]{geometry}

\makeatletter
\def\section{\@startsection{section}{1}%
  \z@{.7\linespacing\@plus\linespacing}{.5\linespacing}%
  {\normalfont\bfseries\centering}}

\def\@setauthors{%
  \begingroup
  \trivlist
  \centering
  \@topsep 22\p@\relax
  \advance\@topsep by -\baselineskip
  \item\relax
  {\normalfont\normalsize\authors\par}
  \vspace{3\p@}
  {\normalfont\small
   School of Mathematical and Statistical Sciences, Northern Illinois University, DeKalb, IL 60115, USA\par}
  \vspace{1\p@}
  {\normalfont\small\texttt{markjoeuba@gmail.com}\par}
  \endtrivlist
  \endgroup
}
\makeatother

\begin{document}
\title[Perturbation-resilient inertial hybrid retractions]
{Perturbation-resilient inertial Krasnosel'ski\u\i-type hybrid retractions for generalized nonexpansive mappings}

\author[Markjoe O. Uba]{Markjoe O. Uba}

\subjclass[2020]{47H09, 47H10, 47J25}
\keywords{Inertial algorithm, generalized nonexpansive mapping, NST-condition, vanishing and summable errors, Bregman distance, strong convergence}

\begin{abstract}
Let $\E$ be a uniformly smooth and uniformly convex real Banach space.
We study an inertial hybrid retraction method for a countable sequence of
mappings satisfying the NST-condition and an approximate $\phi$-Fej\'er
inequality with vanishing errors. We prove that the generated sequence
converges strongly to the sunny generalized nonexpansive retraction of the initial point onto the common fixed-point set. The theorem admits vanishing error sequences that need not be summable and therefore contains the summable-error setting as a special case. We also establish a
Bregman-projection analogue and provide illustrative examples.
\end{abstract}

\maketitle
\enlargethispage{2pt}

\section{Introduction}

Inertial-type algorithms, originating in the work of Polyak \cite{Polyak1964},
are two-step iterative procedures in which the next iterate depends not only on
the current point but also on the preceding one. Such inertial terms are widely
used because they improve the practical speed of convergence while
preserving the structure of the underlying fixed point or optimization method.
Alongside inertial methods, hybrid projection and retraction techniques have
become important tools for obtaining strong convergence, especially in settings
where standard Krasnosel'ski\u\i--Mann iterations may only yield weak convergence.
Representative developments include Alber's generalized projection and
retraction framework, the hybrid method of Takahashi, Takeuchi, and Kubota,
and the error-free inertial hybrid scheme of Chidume and Nnakwe
\cite{Alber1996,TakahashiTakeuchiKubota2008,ChidumeNnakwe2020}.

In smooth Banach spaces, the normalized duality mapping \(J\) allows one to
replace the Hilbert-space squared norm geometry by a Banach-space geometry
generated by Alber's functional
\begin{equation}\label{eq:intro-phi}
\phi(x,y):=\|x\|^{2}-2\langle x,Jy\rangle+\|y\|^{2},
\qquad x,y\in \E.
\end{equation}
This functional plays the role of a generalized squared distance and is central
in the study of sunny generalized nonexpansive retractions and generalized
nonexpansive mappings; see, for instance,
\cite{Alber1996,IbarakiTakahashi2010,KohsakaTakahashi2007}. If \(T\)
has a nonempty fixed point set, then the generalized nonexpansive condition may
be expressed as the Fej\'er-type inequality
\begin{equation}\label{eq:fejer-exact}
\phi(Tx,p)\leq \phi(x,p),
\qquad x\in \E,\quad p\in \Fix(T).
\end{equation}
Thus \eqref{eq:fejer-exact} gives a natural decrease condition relative to the
fixed point set.

Most strong convergence results for hybrid retraction schemes are formulated in
an exact setting: the operator values are assumed to be evaluated without error,
and the Fej\'er inequality is assumed to hold exactly. In practical
implementations, however, exact evaluation may not be available. Approximation
errors may arise from approximate operator evaluations and related computational
inaccuracies. These considerations lead naturally to the question of whether
strong convergence can be obtained under controlled inexactness.

The aim of this paper is to prove strong convergence for a scheme with a
vanishing relaxation of the exact \(\phi\)-Fej\'er decrease condition. More
precisely, we allow the Fej\'er estimate to hold with a nonnegative error term \(\varepsilon_k\), 
\(\varepsilon_k\to0\), without requiring the summability condition
\(\sum_{k=1}^{\infty}\varepsilon_k<\infty\), and show that the inertial hybrid
retraction sequence still converges strongly to \(R_{F(\Gamma)}v_0\).

The Bregman framework studied in this paper has a long history. Bregman's 1967
article introduced a generalized projection and relaxation method for finding a
common point of convex sets \cite{Bregman1967}. Subsequent developments by many
authors substantially expanded this framework; see, for example,
\cite{CensorLent1981,CensorZenios1992,BauschkeBorwein1997,
ButnariuIusem2000,KohsakaTakahashi2005} and the references therein.

Against this background, the second purpose of the paper is to formulate and
prove a Bregman-projection analogue within the broader theory of Bregman
distances and projections.

\medskip
\noindent{\bf Related Work.}
Further developments in strong convergence methods for maximal monotone
problems and hybrid fixed-point schemes include
\cite{BelloAMUC2022,ChidumeOtuboEzeaUba2017,ChidumeUbaUzochukwuOtuboIdu2019,
IbarakiTakahashi2010,KlineamSuantaiTakahashi2012,
UbaOtuboOnyido2021,UbaCarpathian2023}.
Error-tolerant and perturbation-resilient iterative methods have also been
studied in fixed point theory and optimization; see, for example,
\cite{Combettes2001,IbarakiSaejung2023,ReichSabach2010}.
Recent work on inertial and perturbed Krasnosel'ski\u\i--Mann-type schemes includes
\cite{CortildPeypouquet2025}. Additional foundations for Bregman geometry and
projection methods may be found in \cite{ButnariuResmerita2006}.

\section{Preliminaries}

Let $\E$ be a real Banach space with dual $\E^*$. The normalized duality mapping $J:\E\to2^{\E^*}$ is defined by
\[
 Jx=\bigl\{x^*\in\E^*: \langle x,x^*\rangle=\|x\|^2=\|x^*\|^2\bigr\}.
\]
If $\E$ is smooth, then $J$ is single-valued. If $\E$ is smooth, strictly convex, and reflexive, then $J$ is a bijection from $\E$ onto $\E^*$; see \cite{Alber1996}. Throughout Sections~2 and 3, $\phi:\E\times\E\to \mathbb{R}$ is the Lyapunov functional defined by \eqref{eq:intro-phi}, and satisfies
\begin{equation}\label{eq:phi-lower}
 (\|x\|-\|y\|)^2\leq \phi(x,y)\leq(\|x\|+\|y\|)^2.
\end{equation}

\begin{lemma}[\cite{KamimuraTakahashi2003}]\label{lem:phi-consistency}
Let $\E$ be smooth and uniformly convex. Suppose that $\{x_n\}$ and $\{y_n\}$ are sequences in $\E$, at least one of which is bounded. If $\phi(x_n,y_n)\to0$, then $\|x_n-y_n\|\to0$.
\end{lemma}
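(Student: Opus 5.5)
We follow the standard Kamimura--Takahashi argument in three steps: first obtain boundedness of both sequences, then show that the norms converge together, and finally show that the weak limit points of the differences must be zero.

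\textbf{Step 1: boundedness.} Suppose without loss of generality that $\{y_n\}$ is bounded. By \eqref{eq:phi-lower},
\[
(\|x_n\|-\|y_n\|)^2\le \phi(x_n,y_n)\to0 .
\]
Hence $\|x_n\|-\|y_n\|\to0$, so $\{x_n\}$ is bounded as well. (If instead $\{x_n\}$ is the bounded one, the argument is symmetric.)

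\textbf{Step 2: reduction to a subsequence where the norms converge.} Assume, for contradiction, that $\|x_n-y_n\|\not\to0$. Then there are $\delta>0$ and a subsequence along which $\|x_n-y_n\|\ge\delta$. Passing to a further subsequence, we may assume $\|y_n\|\to r$. By Step 1, also $\|x_n\|\to r$.

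If $r=0$, then $\|x_n-y_n\|\le\|x_n\|+\|y_n\|\to0$, a contradiction. So assume $r>0$.

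\textbf{Step 3: $x_n-y_n$ has to go to zero.} Expanding $\phi$ gives
\[
\phi(x_n,y_n)=\|x_n\|^2+\|y_n\|^2-2\langle x_n,Jy_n\rangle .
\]
Since $\phi(x_n,y_n)\to0$ and both norms tend to $r$, we get $\langle x_n,Jy_n\rangle\to r^2$.

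Normalize by setting $u_n=x_n/\|x_n\|$ and $w_n=y_n/\|y_n\|$; these are defined for large $n$ because $r>0$. Then
\[
\langle u_n,Jw_n\rangle=\frac{\langle x_n,Jy_n\rangle}{\|x_n\|\,\|y_n\|}\to1 .
\]
We also have
\[
\|u_n+w_n\|\ge\langle u_n+w_n,Jw_n\rangle=\langle u_n,Jw_n\rangle+1\to2 ,
\]
using $\|Jw_n\|=1$ and $\langle w_n,Jw_n\rangle=1$. Therefore $\|(u_n+w_n)/2\|\to1$.

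Uniform convexity now forces $\|u_n-w_n\|\to0$. Undoing the normalization,
\[
x_n-y_n=\|x_n\|(u_n-w_n)+(\|x_n\|-\|y_n\|)w_n ,
\]
where the first term tends to zero because $\|x_n\|$ is bounded and $\|u_n-w_n\|\to0$, and the second tends to zero by Step 1. Hence $\|x_n-y_n\|\to0$, contradicting $\|x_n-y_n\|\ge\delta$.

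The crux of the argument is the duality estimate $\|u_n+w_n\|\ge\langle u_n,Jw_n\rangle+1$, which converts the scalar information $\langle u_n,Jw_n\rangle\to1$ into midpoint norm information. Once that is in hand, uniform convexity applies directly. Smoothness is needed only so that $J$ is single-valued and $\phi$ is well defined.
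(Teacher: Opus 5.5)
Your proof is correct, but it takes a different route from the one behind the cited result; the paper gives no proof of its own and defers to Kamimura--Takahashi.

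\textbf{The cited route.} After the same boundedness step, Kamimura--Takahashi use Xu's quantitative characterization of uniform convexity. For each $r>0$ this produces a strictly increasing, continuous, convex $g:[0,2r]\to\R$ with $g(0)=0$ and
\[
g(\|x-y\|)\leq\phi(x,y)\quad\text{for all } x,y \text{ in the ball of radius } r.
\]
The lemma is then immediate, since $g(\|x_n-y_n\|)\leq\phi(x_n,y_n)\to0$.

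\textbf{Your route.} You argue sequentially by contradiction. You normalize to the unit sphere and use the norming property of $J$ to turn $\langle u_n,Jw_n\rangle\to1$ into $\|(u_n+w_n)/2\|\to1$. You then apply the definition of the modulus of convexity directly.

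\textbf{Comparison.} Your argument is more elementary and self-contained: it needs only the definition of uniform convexity and the homogeneity $J(\lambda y)=\lambda Jy$ for $\lambda>0$. You use that homogeneity implicitly when computing $\langle u_n,Jw_n\rangle$, and you should state it. The Kamimura--Takahashi route depends on Xu's inequality. In exchange it gives a uniform quantitative estimate on bounded sets, which is reusable elsewhere. It also covers the degenerate case $r=0$ automatically, whereas your normalization forces a separate case and repeated subsequence extraction.

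\textbf{Cosmetic point.} Your opening roadmap promises an argument about weak limit points of the differences. Step 3 never uses weak convergence, so remove that phrase.
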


A retraction $R_D:\E\to D$ is \emph{sunny} if
\[
 R_D\bigl(R_Dx+t(x-R_Dx)\bigr)=R_Dx,
\]
whenever $x\in\E$, $t\geq0$, and the displayed argument belongs to the domain. It is \emph{generalized nonexpansive} if
\[
 \phi(R_Dx,R_Dy)\leq \phi(x,y),
 \qquad x,y\in\E.
\]

\begin{lemma}[\cite{IbarakiTakahashi2010,KohsakaTakahashi2007}]\label{lem:retraction-facts}
Let $\E$ be smooth and strictly convex, let $D\subset\E$ be a nonempty closed sunny generalized nonexpansive retract, and let $R_D$ be its sunny generalized nonexpansive retraction. Then
\begin{equation}\label{eq:retraction-pythagorean}
 \phi(x,R_Dx)+\phi(R_Dx,z)\leq\phi(x,z),
 \qquad x\in\E,\ z\in D.
\end{equation}
Moreover, $R_Dx$ is the unique minimizer of $z\mapsto\phi(x,z)$ over $D$.
\end{lemma}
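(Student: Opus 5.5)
The plan is to work directly with the defining properties of $R_D$ and the three-point structure of $\phi$. The key identity, obtained by expanding \eqref{eq:intro-phi}, is
\[
\phi(x,z)=\phi(x,y)+\phi(y,z)+2\langle x-y,\,Jz-Jy\rangle,\qquad x,y,z\in\E .
\]
Taking $y=R_Dx$, the inequality \eqref{eq:retraction-pythagorean} is then equivalent to the variational characterization
\[
\langle x-R_Dx,\;Jz-J R_Dx\rangle\ge 0\qquad\text{for all } z\in D .
\]
So the proof reduces to establishing this inequality for every $z\in D$.

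To obtain it, I will use sunniness together with generalized nonexpansiveness of $R_D$. Fix $x\in\E$ and $z\in D$, and put $x_t:=R_Dx+t(x-R_Dx)$ for $t\ge0$. Sunniness gives $R_Dx_t=R_Dx$. Since $z\in D$ and $R_D$ is a retraction, $R_Dz=z$. Generalized nonexpansiveness then yields
\[
\phi(R_Dx,z)=\phi(R_Dx_t,R_Dz)\le\phi(x_t,z)\qquad\text{for every } t\ge0 .
\]
Next, apply the three-point identity with $x$ replaced by $x_t$ and $y=R_Dx$. Because $\phi(x_t,R_Dx)=\|x_t\|^2-2\langle x_t,JR_Dx\rangle+\|R_Dx\|^2\ge0$, this gives
\[
0\le\phi(x_t,R_Dx)+2t\langle x-R_Dx,\;Jz-JR_Dx\rangle .
\]
The term $\phi(x_t,R_Dx)$ is of order $t^2$ only in a Hilbert space. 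In general I will bound it by $(\|x_t\|-\|R_Dx\|)^2$ from below and compare it with $\|x_t\|^2-\|R_Dx\|^2-2t\langle x-R_Dx,JR_Dx\rangle$. Smoothness of the norm shows that this last quantity is $o(t)$, since the derivative of $\tfrac12\|\cdot\|^2$ at $R_Dx$ is $JR_Dx$. Dividing by $2t$ and letting $t\downarrow0$ gives the desired sign. This Gâteaux-differentiability step is the main obstacle, and smoothness of $\E$ is exactly what it requires.

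With the variational inequality in hand, \eqref{eq:retraction-pythagorean} follows immediately from the three-point identity. For uniqueness of the minimizer, \eqref{eq:retraction-pythagorean} gives $\phi(x,z)\ge\phi(x,R_Dx)+\phi(R_Dx,z)\ge\phi(x,R_Dx)$ for all $z\in D$, so $R_Dx$ is a minimizer of $z\mapsto\phi(x,z)$ over $D$. If $z\in D$ also attains the minimum, then $\phi(R_Dx,z)=0$. In a strictly convex smooth space, $\phi(u,v)=0$ forces $u=v$: by \eqref{eq:phi-lower} we get $\|u\|=\|v\|$ and $\langle u,Jv\rangle=\|v\|^2$, and strict convexity then gives $u=v$. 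Hence $z=R_Dx$, which proves uniqueness.
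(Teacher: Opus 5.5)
The paper gives no proof of this lemma; it quotes it from Ibaraki--Takahashi and Kohsaka--Takahashi. Your strategy is the standard one from that literature:
\begin{itemize}
\item[(i)] use sunniness and generalized nonexpansiveness along the ray $x_t=R_Dx+t(x-R_Dx)$ to get a variational inequality from a one-sided derivative at $t=0$;
\item[(ii)] convert it to the Pythagorean inequality with a three-point identity;
\item[(iii)] get uniqueness from $\phi(u,v)=0\Rightarrow u=v$.
\end{itemize}
One can equally handle the $t\downarrow0$ step with the subgradient inequality $\|x_t\|^2-\|R_Dx\|^2\le 2t\langle x-R_Dx,Jx_t\rangle$ and norm-to-weak$^*$ continuity of $J$. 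Your G\^ateaux-derivative version is equivalent.

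However, your three-point identity has the wrong sign. Expanding gives
\[
\phi(x,z)=\phi(x,y)+\phi(y,z)+2\langle x-y,\,Jy-Jz\rangle .
\]
With your sign, setting $z=x$ would give $0=4\langle x-y,Jx-Jy\rangle$, which fails whenever $x\neq y$. As a result, the variational inequality you set out to prove, $\langle x-R_Dx,Jz-JR_Dx\rangle\ge0$, is false. In $\R^2$ with $D=\{z:z_2\le0\}$, $x=(0,1)$, $R_Dx=0$ and $z=(0,-1)$, it equals $-1$. The correct characterization is $\langle x-R_Dx,JR_Dx-Jz\rangle\ge0$ for all $z\in D$.

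You used the same wrong identity both to derive the inequality and to convert it back, so the two sign errors cancel and your final conclusion is right. Replacing $Jz-JR_Dx$ by $JR_Dx-Jz$ throughout makes the argument valid as it stands.

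Two smaller points on justification:
\begin{itemize}
\item The inequality $0\le\phi(x_t,R_Dx)+2t\langle\cdots\rangle$ comes from combining $\phi(R_Dx,z)\le\phi(x_t,z)$ with the identity, not from $\phi(x_t,R_Dx)\ge0$.
\item The lower bound $(\|x_t\|-\|R_Dx\|)^2$ plays no role. What you need is the exact equality
\[
\phi(x_t,R_Dx)=\|x_t\|^2-\|R_Dx\|^2-2t\langle x-R_Dx,JR_Dx\rangle ,
\]
which is $o(t)$ because $2JR_Dx$ is the G\^ateaux derivative of $\|\cdot\|^2$ at $R_Dx$.
\end{itemize}
The uniqueness argument via $\|u\|=\|v\|$, $\langle u,Jv\rangle=\|v\|^2$ and strict convexity is fine.
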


\begin{lemma}[\cite{KohsakaTakahashi2007}]\label{lem:retract-characterization}
Let $\E$ be smooth, strictly convex, and reflexive, and let $D\subset\E$ be nonempty and closed. Then $D$ is a sunny generalized nonexpansive retract of $\E$ if and only if $JD$ is closed and convex in $\E^*$.
\end{lemma}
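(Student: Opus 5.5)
We prove the two implications separately. The common tool is a variational characterization of a sunny generalized nonexpansive retraction: for $x\in\E$ and $r\in D$,
\[
r=R_Dx \iff \langle x-r, Jy-Jr\rangle\le 0 \quad\text{for all } y\in D. \tag{$\ast$}
\]
The link between $(\ast)$ and the three-point inequality is the elementary identity, obtained by expanding \eqref{eq:intro-phi},
\[
\phi(x,y)=\phi(x,z)+\phi(z,y)+2\langle x-z,Jz-Jy\rangle .
\]
Here "generalized nonexpansive" is read in the standard sense $\phi(Rx,p)\le\phi(x,p)$ for $p\in D$, as in \eqref{eq:fejer-exact}. We use the standing facts that $E^*$ is smooth, strictly convex and reflexive, and that $J$ is a strictly monotone bijection.

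\emph{($\Leftarrow$)} Assume $JD$ is closed and convex. For fixed $x$, write
\[
\phi(x,z)=\|x\|^2-2\langle x,z^*\rangle+\|z^*\|^2, \qquad z^*=Jz .
\]
As a function of $z^*\in JD$ this is strictly convex (since $E^*$ is strictly convex), continuous and coercive. By reflexivity it has a unique minimizer $z^*_x$ on the closed convex set $JD$. Set $Rx:=J^{-1}z^*_x\in D$.

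1. \emph{$R$ is a retraction.} If $x\in D$, then $\phi(x,x)=0$, so $Rx=x$.
2. \emph{$R$ satisfies $(\ast)$.} Since $E^*$ is smooth, the derivative of $\|\cdot\|^2$ at $z^*$ is $2J^{-1}z^*$. The first-order optimality condition at $z^*_x$ over the convex set $JD$ is therefore
\[
\langle J^{-1}z^*_x - x,\; y^*-z^*_x\rangle\ge0 \quad\text{for all } y^*\in JD,
\]
which is exactly $(\ast)$. By convexity of the objective, $(\ast)$ is also sufficient, hence characterizes $Rx$.
3. \emph{$R$ is generalized nonexpansive.} Put $z=Rx$ in the identity above and use $(\ast)$. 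This gives \eqref{eq:retraction-pythagorean}, and in particular $\phi(Rx,p)\le\phi(x,p)$ for $p\in D$.
4. \emph{$R$ is sunny.} Let $x_t=Rx+t(x-Rx)$ with $t\ge0$. Then $x_t-Rx=t(x-Rx)$, so $Rx$ satisfies $(\ast)$ with $x_t$ in place of $x$. Since $(\ast)$ characterizes the minimizer, $Rx_t=Rx$.

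\emph{($\Rightarrow$)} Let $R=R_D$ be a sunny generalized nonexpansive retraction.

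1. \emph{Derive $(\ast)$.} Fix $x\in\E$, $y\in D$ and $t>0$, and let $x_t=Rx+t(x-Rx)$. By sunniness $Rx_t=Rx$. Generalized nonexpansivity gives $\phi(Rx,y)\le\phi(x_t,y)$. The identity above with $z=Rx$ turns this into
\[
0\le \phi(x_t,Rx)+2t\langle x-Rx,JRx-Jy\rangle .
\]
The first term satisfies $\phi(x_t,Rx)=\phi(Rx+t(x-Rx),Rx)=o(t)$ as $t\to0^+$, by differentiability of $\|\cdot\|^2$. Dividing by $t$ and letting $t\to0^+$ yields $(\ast)$.
2. \emph{Extend $(\ast)$.} The map $y^*\mapsto\langle x-Rx,y^*-JRx\rangle$ is affine and norm-continuous. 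Hence $(\ast)$ persists for all $y^*$ in $C:=\overline{\mathrm{co}}\,JD$.
3. \emph{Conclude $C=JD$.} Take $y^*\in C$, set $y:=J^{-1}y^*$, and apply step 2 with $x=y$:
\[
\langle y-Ry,\;Jy-JRy\rangle\le0 .
\]
Monotonicity of $J$ makes this quantity nonnegative, so it vanishes. Strict monotonicity of $J$ (from strict convexity) then forces $y=Ry\in D$. Thus $y^*\in JD$, so $C=JD$, and $JD$ is closed and convex.

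The main obstacle is step 1 of ($\Rightarrow$), namely justifying $\phi(x_t,Rx)=o(t)$. The plan is to write
\[
\phi(u+th,u)=\|u+th\|^2-\|u\|^2-2t\langle h,Ju\rangle
\]
with $u=Rx$ and $h=x-Rx$, and use Gâteaux differentiability of $\|\cdot\|^2$ with derivative $2J$. A secondary point is the first-order condition in step 2 of ($\Leftarrow$), which needs the derivative formula for $\|\cdot\|^2$ on $E^*$; smoothness of $E^*$ is what supplies it.
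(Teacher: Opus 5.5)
The paper does not prove this lemma; it quotes it from Kohsaka--Takahashi. Your proof is a self-contained replacement and it is correct.

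Your ($\Leftarrow$) direction is the standard construction written out by hand. Minimizing $z^*\mapsto\|z^*\|^2-2\langle x,z^*\rangle$ over $JD$ is Alber's generalized projection onto $JD$ in $\E^*$, evaluated at $Jx$. So your $R$ is $J^{-1}\Pi_{JD}J$, and $(\ast)$ is its usual variational characterization. Doing it by hand also gives \eqref{eq:retraction-pythagorean} and the minimizer statement of Lemma~\ref{lem:retraction-facts} along the way.

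For ($\Rightarrow$) you take a longer road than needed. Sunniness, and the $o(t)$ estimate you single out as the main obstacle, can be dropped entirely:
\begin{itemize}
\item The inequality $\phi(Rz,p)\le\phi(z,p)$ for $p\in D$ is equivalent to $\|Rz\|^2-\|z\|^2\le2\langle Rz-z,Jp\rangle$. This is affine and norm-continuous in $Jp$.
\item Run your steps 2--3 with this inequality in place of $(\ast)$. It persists for all $p^*\in\overline{\mathrm{co}}\,JD$.
\item Taking $z=J^{-1}p^*$ gives $\phi(Rz,z)\le0$, hence $z=Rz\in D$.
\end{itemize}
So any generalized nonexpansive retraction onto $D$, sunny or not, forces $JD$ to be closed and convex. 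This is exactly the fact ``$J\Fix(T)$ is closed and convex'' that the paper already invokes in the proof of Lemma~\ref{lem:common-retract}, applied to $T=R_D$ with $\Fix(R_D)=D$.

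You read ``generalized nonexpansive'' as $\phi(R_Dx,p)\le\phi(x,p)$ for $p\in D$. That reading is not just convenient but necessary. With the two-point inequality $\phi(R_Dx,R_Dy)\le\phi(x,y)$ displayed in Section~2, the ($\Leftarrow$) direction is false. Here is a counterexample:
\begin{itemize}
\item Take $\R^2$ with the $\ell^4$ norm and $D=\{0\}\times\R$. Then $J(0,t)=(0,t)$, so $JD=\{0\}\times\R$ is closed and convex.
\item Your own step 1 of ($\Rightarrow$) shows that any sunny retraction onto $D$ satisfying even the one-point inequality must satisfy $(\ast)$. For this $D$, $(\ast)$ at $r=(0,r_2)$ reads $(x_2-r_2)(y_2-r_2)\le0$ for all $y_2$, which forces $R(x_1,x_2)=(0,x_2)$.
\item For $x=2^{-1/4}(1,1)$ and $y=(1,0)$ we have $\|x\|=\|y\|=1$ and $Jy=(1,0)$. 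This gives $\phi(Rx,Ry)=2^{-1/2}\approx0.71$, while $\phi(x,y)=2-2^{3/4}\approx0.32$.
\end{itemize}
The one-point reading is the one used in the cited source, and it is all the paper later needs.
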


Following Ibaraki and Takahashi \cite{IbarakiTakahashi2010}, a mapping $T:\E\to\E$ is called \emph{generalized nonexpansive} if $\Fix(T)\neq\varnothing$ and
\[
 \phi(Tx,p)\leq\phi(x,p),
 \qquad x\in\E,\ p\in\Fix(T).
\]
For a family $\Gamma$ of mappings, write
\[
 F(\Gamma):=\bigcap_{T\in\Gamma}\Fix(T).
\]
The following consequence of fixed-set results for generalized nonexpansive mappings will be useful.

\begin{lemma}\label{lem:common-retract}
Let $\E$ be smooth, strictly convex, and reflexive. Let $\Gamma$ be a family of generalized nonexpansive mappings on $\E$ such that $F(\Gamma)\neq\varnothing$. Then $F(\Gamma)$ is a sunny generalized nonexpansive retract of $\E$.
\end{lemma}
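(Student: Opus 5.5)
The plan is to reduce everything to Lemma~\ref{lem:retract-characterization}. Since $F(\Gamma)=\bigcap_{T\in\Gamma}\Fix(T)$ is nonempty, it suffices to show two things: $F(\Gamma)$ is closed, and $JF(\Gamma)$ is closed and convex in $\E^*$. Because $J$ is injective (the space is smooth, strictly convex and reflexive), we have $J\bigl(\bigcap_T\Fix(T)\bigr)=\bigcap_T J\Fix(T)$. Closedness and convexity are preserved under arbitrary intersections, so it is enough to prove, for a single generalized nonexpansive $T$, that $\Fix(T)$ is closed and that $J\Fix(T)$ is closed and convex. The only tools I will use are the defining inequality $\phi(Tx,p)\le\phi(x,p)$ for $p\in\Fix(T)$ and the fact that in a strictly convex smooth space $\phi(x,y)=0$ forces $x=y$.

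\textbf{Convexity of $J\Fix(T)$.} Let $p_1,p_2\in\Fix(T)$ and $\lambda\in[0,1]$. Since $J$ is onto $\E^*$, choose $q$ with $Jq=\lambda Jp_1+(1-\lambda)Jp_2$. The key observation is that $\phi(x,p)=\|x\|^2-2\langle x,Jp\rangle+\|p\|^2$ is affine in $Jp$, apart from the term $\|p\|^2$. Put $c:=\lambda\|p_1\|^2+(1-\lambda)\|p_2\|^2$. Then for any $x$,
\[
\lambda\phi(x,p_1)+(1-\lambda)\phi(x,p_2)=\phi(x,q)+c-\|q\|^2 .
\]
Apply this identity with $x=Tq$ and with $x=q$, and use $\phi(Tq,p_i)\le\phi(q,p_i)$. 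This gives $\phi(Tq,q)\le\phi(q,q)=0$. Hence $Tq=q$, so $Jq\in J\Fix(T)$.

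\textbf{Closedness.} First suppose $p_n\in\Fix(T)$ with $Jp_n\to x^*$ in norm. Since $\E$ is reflexive, $\E^*$ is smooth and strictly convex, and $J^{-1}$ is the duality map of $\E^*$. Set $p:=J^{-1}x^*$. Then $\|p_n\|=\|Jp_n\|\to\|x^*\|=\|p\|$. For any fixed $x$, the quantity $\langle x,Jp_n\rangle$ tends to $\langle x,x^*\rangle$, so $\phi(x,p_n)\to\phi(x,p)$. Passing to the limit in $\phi(Tp,p_n)\le\phi(p,p_n)$ yields $\phi(Tp,p)\le\phi(p,p)=0$. Hence $p\in\Fix(T)$, and so $J\Fix(T)$ is closed.

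For closedness of $\Fix(T)$ itself, let $p_n\to p$ in norm. I would use that $J$ is norm-to-weak$^*$ continuous in a smooth space, so that $\langle x,Jp_n\rangle\to\langle x,Jp\rangle$ for each fixed $x$. Together with $\|p_n\|\to\|p\|$, this again gives $\phi(x,p_n)\to\phi(x,p)$, and the same limiting argument applies.

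The main obstacle is that $T$ is not assumed continuous, so the closedness arguments must avoid evaluating $T$ along the sequence $p_n$. The device above handles this: the point $x=Tp$ or $x=p$ stays fixed, and only the second argument of $\phi$ varies. Continuity of $\phi$ in its second argument therefore suffices. Once this is secured, Lemma~\ref{lem:retract-characterization} yields the conclusion.
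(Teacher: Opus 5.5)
Your proof is correct and follows the same route as the paper. In both, injectivity of $J$ gives $JF(\Gamma)=\bigcap_{T\in\Gamma}J\Fix(T)$, and Lemma~\ref{lem:retract-characterization} then finishes the argument. The only difference is that the paper cites Ibaraki--Takahashi for the facts that $\Fix(T)$ is closed and $J\Fix(T)$ is closed and convex, whereas you prove them directly, using the affine dependence of $\phi(x,\cdot)$ on $Jp$ and continuity of $\phi$ in its second argument with the first held fixed; those arguments are sound.
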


\begin{proof}
For each $T\in\Gamma$, the set $\Fix(T)$ is closed and $J\Fix(T)$ is closed and convex; see Ibaraki and Takahashi \cite{IbarakiTakahashi2010}. Since $J$ is one-to-one,
\[
 JF(\Gamma)=\bigcap_{T\in\Gamma}J\Fix(T).
\]
Indeed, let $x^*\in\bigcap_{T\in\Gamma}J\Fix(T)$. For each $T\in\Gamma$, choose $p_T\in\Fix(T)$ with $Jp_T=x^*$. Since $J$ is one-to-one, all the points $p_T$ coincide with a single point $p\in F(\Gamma)$, and hence $x^*=Jp\in JF(\Gamma)$. Thus $JF(\Gamma)$ is closed and convex. The set $F(\Gamma)$ is closed, and Lemma~\ref{lem:retract-characterization} applies.
\end{proof}

\begin{definition}[NST-condition; Klin-eam--Suantai--Takahashi \cite{KlineamSuantaiTakahashi2012}]\label{def:nst}
Let $\{T_k\}_{k\geq1}$ and $\Gamma$ be families of mappings on $\E$ such that
\[
 \bigcap_{k=1}^{\infty}\Fix(T_k)=F(\Gamma)\neq\varnothing.
\]
The sequence $\{T_k\}$ satisfies the \emph{NST-condition with $\Gamma$} if, for every bounded sequence $\{x_k\}\subset\E$,
\[
 \|x_k-T_kx_k\|\to0
 \quad\Longrightarrow\quad
 \|x_k-Tx_k\|\to0
 \quad\text{for every }T\in\Gamma.
\]
\end{definition}

\section{Inertial Hybrid Retraction Theorem}

\begin{definition}\label{def:phi-error}
Let $F\subset\E$ be nonempty. A sequence $\{T_k\}_{k\geq1}$ of mappings $T_k:\E\to\E$ has the \emph{vanishing-error $\phi$-Fej\'er property with respect to $F$} if there is a sequence $\{\varepsilon_k\}\subset[0,\infty)$ such that $\varepsilon_k\to0$ and
\begin{equation}\label{eq:phi-error}
 \phi(T_kx,p)\leq\phi(x,p)+\varepsilon_k,
 \qquad x\in\E,\ p\in F,\ k\geq1.
\end{equation}
In particular, whenever \eqref{eq:phi-error} holds with a nonnegative
summable error sequence, the requirement $\varepsilon_k\to0$ is automatically
satisfied.
\end{definition}

\begin{theorem}\label{thm:main}
Let $\E$ be a uniformly smooth and uniformly convex real Banach space. Let $\{T_k\}_{k\geq1}$ be mappings from $\E$ into itself, and let $\Gamma$ be a family of closed generalized nonexpansive mappings on $\E$ such that
\[
 \bigcap_{k=1}^{\infty}\Fix(T_k)=F(\Gamma)\neq\varnothing.
\]
Assume that $\{T_k\}$ satisfies the NST-condition with $\Gamma$ and the vanishing-error $\phi$-Fej\'er property with respect to $F(\Gamma)$. Let $0\leq\alpha_k\leq\overline\alpha<1$ for every $k$, and let $\{\beta_k\}\subset\R$ be bounded. Given $v_0\in\E$, set $v_1=v_0$ and $\C_1=\E$. For $k\geq1$, define
\begin{equation}\label{eq:main-algorithm}
\begin{aligned}
 w_k&=v_k+\beta_k(v_k-v_{k-1}),\\
 y_k&=\alpha_k w_k+(1-\alpha_k)T_kw_k,\\
 \C_{k+1}
 &=\bigl\{v\in\C_k:
 \phi(y_k,v)\leq\phi(w_k,v)+(1-\alpha_k)\varepsilon_k\bigr\},\\
 v_{k+1}&=R_{\C_{k+1}}v_0.
\end{aligned}
\end{equation}
Then every $\C_k$ is a nonempty closed sunny generalized nonexpansive retract of $\E$, so the iteration is well defined. Moreover,
\[
 v_k\longrightarrow R_{F(\Gamma)}v_0
\]
strongly.
\end{theorem}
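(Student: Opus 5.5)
The plan has two parts: first show that every $\C_k$ is a good retract containing $F(\Gamma)$; then run the usual hybrid-method argument on the monotone quantity $\phi(v_0,v_k)$.

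\emph{Step 1: the sets $\C_k$.} For $y,w\in\E$ and $c\geq0$, expanding \eqref{eq:intro-phi} shows that $\phi(y,v)\leq\phi(w,v)+c$ is equivalent to
\[
 \|y\|^2-\|w\|^2-c\leq 2\langle y-w,Jv\rangle .
\]
This is a closed half-space condition on $Jv\in\E^*$. I will argue by induction on $k$.
\begin{itemize}
\item Since $\E$ is uniformly smooth and uniformly convex, $J$ is a homeomorphism of $\E$ onto $\E^*$.
\item Hence $J\C_{k+1}=J\C_k\cap H_k$ for a closed half-space $H_k$, and this set is closed and convex.
\item $\C_{k+1}$ is closed, because $v\mapsto\phi(y_k,v)-\phi(w_k,v)$ is continuous.
\item Lemma~\ref{lem:retract-characterization} then makes $\C_{k+1}$ a sunny generalized nonexpansive retract, provided it is nonempty.
\end{itemize}
For nonemptiness I will show $F(\Gamma)\subset\C_k$ for all $k$. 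The map $x\mapsto\phi(x,p)$ is convex, since it is $\|x\|^2$ plus an affine function. Therefore, for $p\in F(\Gamma)$, \eqref{eq:phi-error} gives
\[
 \phi(y_k,p)\leq\alpha_k\phi(w_k,p)+(1-\alpha_k)\phi(T_kw_k,p)\leq\phi(w_k,p)+(1-\alpha_k)\varepsilon_k .
\]
So $p\in\C_{k+1}$ whenever $p\in\C_k$. Lemma~\ref{lem:common-retract} guarantees that $R_{F(\Gamma)}$ exists.

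\emph{Step 2: $\{v_k\}$ is Cauchy.} Put $p^*=R_{F(\Gamma)}v_0\in\C_k$. By Lemma~\ref{lem:retraction-facts}, $\phi(v_0,v_k)\leq\phi(v_0,p^*)$. Since $v_{k+1}\in\C_{k+1}\subset\C_k$, \eqref{eq:retraction-pythagorean} gives
\[
 \phi(v_0,v_k)+\phi(v_k,v_m)\leq\phi(v_0,v_m),\qquad m\geq k .
\]
It follows that $\phi(v_0,v_k)$ is nondecreasing and bounded, and hence convergent. By \eqref{eq:phi-lower}, $\{v_k\}$ is bounded. Moreover $\phi(v_k,v_m)\to0$ as $m\geq k\to\infty$.

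To conclude that the sequence is Cauchy, I argue by contradiction. If it were not Cauchy, there would be indices $k_j<m_j$ with $\|v_{k_j}-v_{m_j}\|\geq\delta>0$. But $\phi(v_{k_j},v_{m_j})\to0$, so Lemma~\ref{lem:phi-consistency} forces $\|v_{k_j}-v_{m_j}\|\to0$, a contradiction. Hence $v_k\to v^*$ for some $v^*$.

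\emph{Step 3: $v^*\in F(\Gamma)$ and $v^*=p^*$.}
\begin{itemize}
\item Since $\beta_k$ is bounded, $\|w_k-v_k\|=|\beta_k|\,\|v_k-v_{k-1}\|\to0$, so $w_k\to v^*$.
\item From $v_{k+1}\in\C_{k+1}$ we get $\phi(y_k,v_{k+1})\leq\phi(w_k,v_{k+1})+\varepsilon_k$. The right side tends to $0$ by continuity of $\phi$ and $\varepsilon_k\to0$. Lemma~\ref{lem:phi-consistency} then gives $\|y_k-v_{k+1}\|\to0$, so $y_k\to v^*$.
\item Next, $y_k-w_k=(1-\alpha_k)(T_kw_k-w_k)$ and $1-\alpha_k\geq1-\overline\alpha>0$, so $\|w_k-T_kw_k\|\to0$.
\item Because $\{w_k\}$ is bounded, the NST-condition yields $\|w_k-Tw_k\|\to0$ for every $T\in\Gamma$. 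As $w_k\to v^*$, also $Tw_k\to v^*$, and closedness of $T$ gives $Tv^*=v^*$. Thus $v^*\in F(\Gamma)$.
\end{itemize}
Finally, $\phi(v_0,v^*)=\lim\phi(v_0,v_k)\leq\phi(v_0,p^*)$. Since $p^*$ is the unique minimizer of $\phi(v_0,\cdot)$ over $F(\Gamma)$ (Lemma~\ref{lem:retraction-facts}), we get $v^*=p^*$.

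The main points needing care are the two places where the Banach geometry enters. One is verifying that $J\C_{k+1}$ is closed in $\E^*$; this uses continuity of $J^{-1}$ via uniform convexity and smoothness. The other is the Cauchy argument, which relies on Lemma~\ref{lem:phi-consistency} along subsequences. Note that the errors enter only through $\varepsilon_k\to0$ in the second bullet of Step 3, which is why summability is never needed.
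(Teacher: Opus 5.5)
Your proposal is correct and follows the paper's proof essentially step for step. It rewrites the cut as a half-space condition on $Jv$, inducts using Lemma~\ref{lem:retract-characterization} and the convexity of $x\mapsto\phi(x,p)$, gets the Cauchy property from the monotonicity of $\phi(v_0,v_k)$ together with Lemma~\ref{lem:phi-consistency}, and then uses residual decay, the NST-condition with closedness, and identification of the limit as the unique minimizer. One small remark: closedness of $J\C_{k+1}$ already follows from $J\C_{k+1}=J\C_k\cap H_k$ with $J\C_k$ closed by the induction hypothesis, so continuity of $J^{-1}$ is not needed there. Only continuity of $J$ is used, for the closedness of $\C_{k+1}$ and the continuity of $\phi$.
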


\begin{proof}
Set $F:=F(\Gamma)$. By Lemma~\ref{lem:common-retract}, $F$ is a sunny generalized nonexpansive retract.

\smallskip
\noindent\emph{Step 1: Construction of the Shrinking Retracts.}
We prove inductively that
\begin{equation}\label{eq:induction-properties}
 F\subset\C_k,\qquad \C_k\text{ is closed},\qquad J\C_k\text{ is closed and convex}.
\end{equation}
The assertions hold for $\C_1=\E$. Indeed, uniform smoothness and uniform convexity imply that $\E$ is smooth, strictly convex, and reflexive. Hence $J:\E\to\E^*$ is bijective and $J\C_1=J\E=\E^*$.

Assume \eqref{eq:induction-properties} at index $k$. Put
\[
 \delta_k=(1-\alpha_k)\varepsilon_k,
 \qquad
 c_k=\frac{\|w_k\|^2-\|y_k\|^2+\delta_k}{2}.
\]
Expanding $\phi$ shows that, for $v\in\C_k$,
\[
 \phi(y_k,v)\leq\phi(w_k,v)+\delta_k
 \quad\Longleftrightarrow\quad
 \langle w_k-y_k,Jv\rangle\leq c_k.
\]
Thus
\begin{equation}\label{eq:primal-cut}
 \C_{k+1}=\{v\in\C_k:\langle w_k-y_k,Jv\rangle\leq c_k\}.
\end{equation}
Uniform smoothness implies that $J$ is norm-to-norm continuous on bounded subsets. To verify closedness, let $v_n\in\C_{k+1}$ and suppose that $v_n\to v$. Then $v\in\C_k$ because $\C_k$ is closed. The sequence $\{v_n\}$ is bounded, so $Jv_n\to Jv$. Consequently, continuity of the duality pairing yields
\[
 \langle w_k-y_k,Jv\rangle
 =\lim_{n\to\infty}\langle w_k-y_k,Jv_n\rangle\leq c_k.
\]
Hence $v\in\C_{k+1}$, and $\C_{k+1}$ is closed.

Let $p\in F$. By the induction hypothesis, $p\in\C_k$. Since $x\mapsto\phi(x,p)$ is convex, \eqref{eq:phi-error} gives
\begin{align*}
 \phi(y_k,p)
 &\leq \alpha_k\phi(w_k,p)+(1-\alpha_k)\phi(T_kw_k,p)\\
 &\leq \phi(w_k,p)+(1-\alpha_k)\varepsilon_k.
\end{align*}
Hence $p\in\C_{k+1}$, and therefore $F\subset\C_{k+1}$.

Define the dual halfspace
\[
 H_k^*=\{x^*\in\E^*:\langle w_k-y_k,x^*\rangle\leq c_k\}.
\]
Because $J$ is bijective, \eqref{eq:primal-cut} yields the exact identity
\[
 J\C_{k+1}=J\C_k\cap H_k^*.
\]
The right-hand side is closed and convex. Lemma~\ref{lem:retract-characterization} therefore shows that $\C_{k+1}$ is a sunny generalized nonexpansive retract.

\smallskip
\noindent\emph{Step 2: Boundedness and the Nested-Set Estimate.}
Fix $p\in F$. Since $p\in\C_k$ and $v_k=R_{\C_k}v_0$, Lemma~\ref{lem:retraction-facts} gives
\begin{equation}\label{eq:main-bounded}
 \phi(v_0,v_k)\leq\phi(v_0,p),
 \qquad k\geq1.
\end{equation}
By \eqref{eq:phi-lower}, $\{v_k\}$ is bounded. Since $\C_{k+1}\subset\C_k$ and $v_{k+1}\in\C_k$, applying \eqref{eq:retraction-pythagorean} to $R_{\C_k}v_0=v_k$ with $z=v_{k+1}$ gives
\[
 \phi(v_0,v_k)+\phi(v_k,v_{k+1})\leq\phi(v_0,v_{k+1}).
\]
In particular, $\phi(v_0,v_k)\leq\phi(v_0,v_{k+1})$.
Thus $\{\phi(v_0,v_k)\}$ is nondecreasing and bounded. Let
\[
 \ell=\lim_{k\to\infty}\phi(v_0,v_k).
\]
If $m>n$, then $v_m\in\C_m\subset\C_n$, and \eqref{eq:retraction-pythagorean} gives
\begin{equation}\label{eq:main-nested-estimate}
 \phi(v_n,v_m)
 \leq\phi(v_0,v_m)-\phi(v_0,v_n).
\end{equation}

\smallskip
\noindent\emph{Step 3: Convergence and Residual Decay.}
We now prove that $\{v_k\}$ is Cauchy. If not, there would exist $\rho>0$ and indices $m_j>n_j\to\infty$ such that $\|v_{m_j}-v_{n_j}\|\geq\rho$ for all $j$. Since $\phi(v_0,v_k)\to\ell$, \eqref{eq:main-nested-estimate} gives
\[
 0\leq\phi(v_{n_j},v_{m_j})
 \leq\phi(v_0,v_{m_j})-\phi(v_0,v_{n_j})\longrightarrow0.
\]
The sequence $\{v_k\}$ is bounded, so Lemma~\ref{lem:phi-consistency} yields $\|v_{m_j}-v_{n_j}\|\to0$, a contradiction. Thus $\{v_k\}$ is Cauchy. Since $\E$ is complete,
\begin{equation}\label{eq:vk-to-q}
 v_k\to q,
\end{equation}
for some $q\in\E$. For each fixed $n$, one has $v_k\in\C_k\subset\C_n$ whenever $k\geq n$. Since $\C_n$ is closed, $q\in\C_n$. Hence $q\in\bigcap_{n\geq1}\C_n$.

Let $B=\sup_k|\beta_k|<\infty$. From \eqref{eq:vk-to-q},
\[
 \|w_k-v_k\|\leq B\|v_k-v_{k-1}\|\to0,
 \qquad\text{so }w_k\to q.
\]
Because $v_{k+1}\in\C_{k+1}$,
\[
 0\leq\phi(y_k,v_{k+1})
 \leq\phi(w_k,v_{k+1})+(1-\alpha_k)\varepsilon_k.
\]
The convergences $w_k\to q$ and $v_{k+1}\to q$, together with norm-to-norm continuity of $J$ on bounded sets, imply $\phi(w_k,v_{k+1})\to\phi(q,q)=0$. Also, $0\leq(1-\alpha_k)\varepsilon_k\leq\varepsilon_k\to0$. Thus the right-hand side tends to zero. Lemma~\ref{lem:phi-consistency} gives
\[
 \|y_k-v_{k+1}\|\to0.
\]
Moreover,
\[
 \|v_{k+1}-w_k\|
 \leq \|v_{k+1}-q\|+\|w_k-q\|\to0.
\]
Hence $\|y_k-w_k\|\to0$. Since
\[
 y_k-w_k=(1-\alpha_k)(T_kw_k-w_k),
\]
and $1-\alpha_k\geq1-\overline\alpha>0$, we obtain
\begin{equation}\label{eq:main-residual}
 \|T_kw_k-w_k\|\to0.
\end{equation}

\smallskip
\noindent\emph{Step 4: Application of the NST-Condition and $q\in F$.}
The sequence $\{w_k\}$ is bounded. By the NST-condition and \eqref{eq:main-residual},
\[
 \|Tw_k-w_k\|\to0,
 \qquad T\in\Gamma.
\]
Fix $T\in\Gamma$. Since $w_k\to q$ and $\|Tw_k-w_k\|\to0$, we have $Tw_k\to q$. Therefore $(w_k,Tw_k)\to(q,q)$, and closedness of the graph of $T$ yields $Tq=q$. Hence $q\in F$.

\smallskip
\noindent\emph{Step 5: Identification of the Limit.}
Let $p\in F$. Since $v_k\to q$ and $Jv_k\to Jq$ by the
norm-to-norm continuity of $J$ on bounded sets, one has
\[
 \phi(v_0,v_k)\to\phi(v_0,q).
\]
Thus \eqref{eq:main-bounded} implies
\[
 \phi(v_0,q)=\lim_{k\to\infty}\phi(v_0,v_k)\leq\phi(v_0,p).
\]
Thus $q$ minimizes $z\mapsto\phi(v_0,z)$ over $F$. Lemma~\ref{lem:retraction-facts} gives $q=R_Fv_0$.
\end{proof}

\section{A Bregman-Projection Analogue}\label{sec:bregman}

Let $f:\E\to\R$ be convex and G\^ateaux differentiable. Its Bregman distance is
\[
 D_f(x,y)=f(x)-f(y)-\langle x-y,\nabla f(y)\rangle.
\]
We use the standard projection orientation
\[
 P_D^f x=\operatorname*{argmin}_{z\in D}D_f(z,x).
\]

\begin{definition}\label{def:admissible-bregman}
Let $\E$ be reflexive. A function $f:\E\to\R$ is called an \emph{admissible Bregman generator} if
\begin{enumerate}
 \item $f$ is Legendre and strongly coercive;
 \item $f$ is bounded and uniformly Fr\'echet differentiable on bounded subsets of $\E$;
 \item $f$ is uniformly convex on bounded subsets of $\E$;
 \item for every $x\in\E$ and $r>0$, the right Bregman sublevel set
 \[
  \{y\in\E:D_f(x,y)\leq r\},
 \]
 is bounded.
\end{enumerate}
\end{definition}

The following lemma gives consequences of Legendre duality,
Bregman projection theory, and total convexity; see
\cite{AlberButnariu1997,BauschkeBorwein1997,ButnariuIusem2000,
ButnariuIusemZalinescu2003,ButnariuResmerita2006}.

\begin{lemma}\label{lem:bregman-facts}
Let $f$ be an admissible Bregman generator. Then:
\begin{enumerate}
 \item $\nabla f:\E\to\E^*$ is a bijection,
 \[
  \nabla f^*=(\nabla f)^{-1}.
 \]
 Both gradients are bounded and uniformly norm-to-norm continuous on
 bounded subsets of their domains; see
 \cite{BauschkeBorwein1997,ButnariuIusem2000}.

 \item If $D\subset\E$ is nonempty, closed, and convex, then the Bregman
 projection
 \[
  P_D^f x=\operatorname*{argmin}_{z\in D}D_f(z,x),
 \]
 exists uniquely and satisfies
 \[
  D_f(z,P_D^f x)+D_f(P_D^f x,x)\leq D_f(z,x),
  \qquad z\in D.
 \]
 see
 \cite{AlberButnariu1997,BauschkeBorwein1997,
 ButnariuResmerita2006}.

 \item For bounded sequences $\{x_n\}$ and $\{y_n\}$,
 \begin{equation}\label{eq:bregman-consistency}
  D_f(x_n,y_n)\to0
  \quad\Longleftrightarrow\quad
  \|x_n-y_n\|\to0.
 \end{equation}
 The implication from vanishing Bregman distance to norm convergence
 follows from total convexity on bounded sets; see
 \cite{ButnariuIusemZalinescu2003,ButnariuResmerita2006}.
 The converse follows from the uniform Fr\'echet differentiability of
 $f$ on bounded subsets of $\E$.
\end{enumerate}
\end{lemma}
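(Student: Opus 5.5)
The statement to prove is Lemma~\ref{lem:bregman-facts}, which collects three standard facts about an admissible Bregman generator $f$. I would treat it as an assembly of known results, checking that the hypotheses of Definition~\ref{def:admissible-bregman} supply what each cited theorem needs.

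\textbf{Item (1).} Since $f$ is Legendre on the reflexive space $\E$, the Bauschke--Borwein--Combettes theory gives three facts: $f^*$ is Legendre, $\nabla f$ is a bijection from $\operatorname{int}\operatorname{dom} f=\E$ onto $\operatorname{int}\operatorname{dom} f^*$, and $\nabla f^*=(\nabla f)^{-1}$. Strong coercivity of $f$ makes $f^*$ finite everywhere, so $\operatorname{dom}f^*=\E^*$ and the bijection is onto $\E^*$.

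For $\nabla f$, boundedness and uniform norm-to-norm continuity on bounded sets come directly from uniform Fr\'echet differentiability on bounded sets, together with boundedness of $f$ on bounded sets.

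For $\nabla f^*$, I would use the duality between uniform convexity of $f$ on bounded sets and uniform smoothness of $f^*$ on bounded sets, in the form proved by Z\u{a}linescu and Butnariu--Iusem. The step needing care is to check that $\nabla f^*$ maps bounded sets to bounded sets. I would get this from strong coercivity of $f$: if $x^*=\nabla f(x)$ with $\|x^*\|\le r$, then $f(x)-f(0)\le\langle x,x^*\rangle\le r\|x\|$, which forces $\|x\|$ to be bounded.

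\textbf{Item (2).} For $D$ nonempty, closed and convex, the map $z\mapsto D_f(z,x)$ is convex and lower semicontinuous. It is coercive because $f$ is strongly coercive, and strictly convex because $f$ is Legendre. Reflexivity therefore gives existence of a minimizer, and strict convexity gives uniqueness.

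The three-point inequality comes from the variational characterization
\[
\langle z-P_D^f x,\nabla f(x)-\nabla f(P_D^f x)\rangle\le 0,\qquad z\in D,
\]
combined with the three-point identity $D_f(z,x)=D_f(z,y)+D_f(y,x)+\langle z-y,\nabla f(y)-\nabla f(x)\rangle$, applied with $y=P_D^fx$.

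\textbf{Item (3).} For the forward implication, uniform convexity on bounded sets gives a modulus $\nu_r$ with $D_f(x,y)\ge\nu_r(\|x-y\|)$ on the ball of radius $r$, where $\nu_r(t)>0$ for $t>0$ and $\nu_r$ is nondecreasing. Hence $D_f(x_n,y_n)\to0$ forces $\|x_n-y_n\|\to0$.

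For the converse I would write
\[
D_f(x,y)=\int_0^1\langle x-y,\nabla f(y+t(x-y))-\nabla f(y)\rangle\,dt
\le\|x-y\|\sup_{t}\|\nabla f(y+t(x-y))-\nabla f(y)\|.
\]
Uniform continuity of $\nabla f$ on bounded sets, from item (1), then sends the right-hand side to $0$ whenever $\|x_n-y_n\|\to0$.

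\textbf{Main obstacle.} I expect the hardest part to be the claim in (1) about uniform continuity of $\nabla f^*$. It relies on the uniform convexity/smoothness duality on bounded sets, which I would cite from Z\u{a}linescu's theorem rather than reprove.
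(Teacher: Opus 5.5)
Your proposal is correct and takes essentially the paper's route: the paper supports this lemma only by citing the standard Legendre-duality, Bregman-projection and total-convexity literature, and you rely on the same facts for (1) while writing out the standard arguments behind (2) (variational inequality plus three-point identity) and (3) (the uniform-convexity lower bound $D_f(x,y)\ge\nu_r(\|x-y\|)$ and a gradient estimate), in line with what the statement indicates. One small simplification: for the converse in (3), boundedness of $\nabla f$ on bounded sets already suffices, since $D_f(x,y)\le\langle x-y,\nabla f(x)-\nabla f(y)\rangle\le\|x-y\|\,\|\nabla f(x)-\nabla f(y)\|$, so uniform continuity of $\nabla f$ is not needed there.
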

For $x\in\E$ and $x^*\in\E^*$, define
\[
 V_f(x,x^*)=f(x)-\langle x,x^*\rangle+f^*(x^*).
\]
Then
\begin{equation}\label{eq:V-identity}
 V_f(x,x^*)=D_f\bigl(x,\nabla f^*(x^*)\bigr),
\end{equation}
and $x^*\mapsto V_f(x,x^*)$ is convex; see, for example,
\cite{BauschkeBorwein1997,ButnariuIusem2000}.

\begin{definition}\label{def:bregman-error}
Let $F\subset\E$ be nonempty. A sequence $\{T_k\}_{k\geq1}$ has the \emph{vanishing-error $D_f$-Fej\'er property with respect to $F$} if there is a sequence $\{\varepsilon_k\}\subset[0,\infty)$ with $\varepsilon_k\to0$ such that
\begin{equation}\label{eq:bregman-error}
 D_f(p,T_kx)\leq D_f(p,x)+\varepsilon_k,
 \qquad x\in\E,\ p\in F,\ k\geq1.
\end{equation}
In particular, whenever \eqref{eq:bregman-error} holds with a nonnegative summable error sequence, the requirement $\varepsilon_k\to0$ is automatically satisfied.
\end{definition}

Related shrinking-projection results for Bregman asymptotically
quasi-nonexpansive mappings in the intermediate sense were obtained in
\cite{Tomizawa2014}, and an inertial hybrid Bregman scheme for a
variational-like system and a single asymptotic fixed-point mapping was studied
in \cite{AlNemerAliFarid2025}.  The result below differs in that the NST-condition carries the residual convergence from $\{T_k\}$ to every mapping in the countable family, while the additive errors are required only to vanish.

\begin{theorem}\label{thm:bregman}
Let $\E$ be a reflexive real Banach space and let $f$ be an admissible Bregman generator. Let $\{T_k\}_{k\geq1}$ be mappings from $\E$ into itself, and let $\Gamma$ be a family of closed mappings such that
\[
 \bigcap_{k=1}^{\infty}\Fix(T_k)=F(\Gamma)\neq\varnothing.
\]
Assume that $F(\Gamma)$ is closed and convex, that $\{T_k\}$ satisfies the NST-condition with $\Gamma$, and that \eqref{eq:bregman-error} holds with $F=F(\Gamma)$. Let $0\leq\alpha_k\leq\overline\alpha<1$ for every $k$, and let $\{\beta_k\}\subset\R$ be bounded. Given $v_0\in\E$, set $v_1=v_0$ and $\C_1=\E$. Define
\begin{equation}\label{eq:bregman-algorithm}
\begin{aligned}
 w_k&=v_k+\beta_k(v_k-v_{k-1}),\\
 y_k&=\nabla f^*\!\left(\alpha_k\nabla f(w_k)
 +(1-\alpha_k)\nabla f(T_kw_k)\right),\\
 \C_{k+1}
 &=\bigl\{v\in\C_k:
 D_f(v,y_k)\leq D_f(v,w_k)+(1-\alpha_k)\varepsilon_k\bigr\},\\
 v_{k+1}&=P_{\C_{k+1}}^f v_0.
\end{aligned}
\end{equation}
Then every $\C_k$ is nonempty, closed, and convex, the iteration is well defined, and
\[
 v_k\longrightarrow P_{F(\Gamma)}^f v_0
\]
strongly.
\end{theorem}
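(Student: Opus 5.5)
The plan is to copy the five-step structure of the proof of Theorem~\ref{thm:main}, with $\phi$ replaced by $D_f$, $J$ by $\nabla f$, and the retraction facts by Lemma~\ref{lem:bregman-facts}. Throughout, set $F:=F(\Gamma)$ and $\delta_k=(1-\alpha_k)\varepsilon_k$.

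\emph{Step 1: the sets $\C_k$.} First I expand the defining inequality of $\C_{k+1}$ using $D_f(v,y)=f(v)-f(y)-\langle v-y,\nabla f(y)\rangle$. The $f(v)$ terms cancel, so the inequality becomes
\[
\langle v,\nabla f(w_k)-\nabla f(y_k)\rangle\leq c_k,
\qquad c_k:=f(y_k)-f(w_k)+\langle y_k,\nabla f(y_k)\rangle-\langle w_k,\nabla f(w_k)\rangle+\delta_k .
\]
This is a closed halfspace in the primal variable. Hence, by induction, each $\C_{k+1}$ is closed and convex.

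Next I show by induction that $F\subset\C_k$. Take $p\in F$ and use \eqref{eq:V-identity}: $D_f(p,y_k)=V_f\bigl(p,\alpha_k\nabla f(w_k)+(1-\alpha_k)\nabla f(T_kw_k)\bigr)$. Convexity of $V_f(p,\cdot)$ and \eqref{eq:bregman-error} then give
\[
D_f(p,y_k)\le\alpha_kD_f(p,w_k)+(1-\alpha_k)D_f(p,T_kw_k)\le D_f(p,w_k)+\delta_k ,
\]
so $p\in\C_{k+1}$. It follows that every $\C_k$ is nonempty, and Lemma~\ref{lem:bregman-facts}(2) makes $v_{k+1}$ well defined.

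\emph{Step 2: boundedness and monotonicity.} Fix $p\in F\subset\C_k$. The projection inequality gives $D_f(v_k,v_0)\le D_f(p,v_0)$. I then need $\{v_k\}$ bounded. This follows from strong coercivity: $D_f(\cdot,v_0)$ is coercive because $f(z)-\langle z,\nabla f(v_0)\rangle\to\infty$ as $\|z\|\to\infty$. I treat this as the one genuinely new point compared with Section~3, since the roles of the arguments are swapped. I would also note that hypothesis (4) of Definition~\ref{def:admissible-bregman} is not needed for this orientation.

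For monotonicity, $v_{k+1}\in\C_{k+1}\subset\C_k$, so Lemma~\ref{lem:bregman-facts}(2) gives
\[
D_f(v_{k+1},v_k)+D_f(v_k,v_0)\le D_f(v_{k+1},v_0).
\]
Hence $D_f(v_k,v_0)$ is nondecreasing and bounded, and therefore converges to some $\ell$. For $m>n$ the same inequality gives $D_f(v_m,v_n)\le D_f(v_m,v_0)-D_f(v_n,v_0)$.

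\emph{Step 3: Cauchy property and residual.} Suppose $\{v_k\}$ were not Cauchy. Combining the last estimate with \eqref{eq:bregman-consistency} on the bounded sequence gives a contradiction, so $v_k\to q$. Since each $\C_n$ is closed and contains $v_k$ for $k\ge n$, we get $q\in\bigcap_n\C_n$.

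Boundedness of $\beta_k$ then gives $w_k\to q$. From $v_{k+1}\in\C_{k+1}$ we have
\[
0\le D_f(v_{k+1},y_k)\le D_f(v_{k+1},w_k)+\delta_k .
\]
The first term on the right tends to $0$ by \eqref{eq:bregman-consistency}, since both sequences converge to $q$ and are bounded. There is one subtlety: \eqref{eq:bregman-consistency} needs $\{y_k\}$ bounded before we may conclude $\|v_{k+1}-y_k\|\to0$. I would get this from boundedness of $T_kw_k$. By \eqref{eq:bregman-error}, $D_f(p,T_kw_k)$ is bounded, and admissibility item (4) is exactly what makes $\{T_kw_k\}$ bounded. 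Boundedness of $\nabla f$ and $\nabla f^*$ on bounded sets then bounds $y_k$. This is the main obstacle I anticipate, because it is precisely where (4) enters.

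With $y_k$ bounded, $y_k\to q$, so $\|y_k-w_k\|\to0$. Uniform continuity of $\nabla f$ on bounded sets gives $\|\nabla f(y_k)-\nabla f(w_k)\|\to0$. This quantity equals $(1-\alpha_k)\|\nabla f(T_kw_k)-\nabla f(w_k)\|$ and $1-\alpha_k\ge1-\overline\alpha$, so $\|\nabla f(T_kw_k)-\nabla f(w_k)\|\to0$. Uniform continuity of $\nabla f^*$ on bounded subsets of $\E^*$ then yields $\|T_kw_k-w_k\|\to0$.

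\emph{Steps 4--5: identifying the limit.} The NST-condition together with closedness of each $T\in\Gamma$ gives $q\in F$. Finally, continuity of $f$ and $\nabla f$ gives $D_f(q,v_0)=\lim_k D_f(v_k,v_0)\le D_f(p,v_0)$ for all $p\in F$. Since $F$ is closed and convex, uniqueness in Lemma~\ref{lem:bregman-facts}(2) gives $q=P^f_Fv_0$.
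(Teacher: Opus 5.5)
Your proposal is correct and follows the paper's proof essentially step for step. The shared ingredients are the halfspace form of $\C_{k+1}$, convexity of $V_f(p,\cdot)$ to keep $F\subset\C_{k+1}$, coercivity of $D_f(\cdot,v_0)$ for boundedness, the nested-set estimate with Lemma~\ref{lem:bregman-facts}(3) for the Cauchy property, the right-sublevel condition to bound $\{T_kw_k\}$ and hence $\{y_k\}$, and the NST-condition with closed graphs to identify the limit. The only slip is a harmless sign error: when you move terms across the inequality, the pairing terms in $c_k$ should be $+\langle w_k,\nabla f(w_k)\rangle-\langle y_k,\nabla f(y_k)\rangle$ rather than the reverse, which does not matter since only the halfspace form is used.
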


\begin{proof}
Set $F:=F(\Gamma)$ and $\delta_k=(1-\alpha_k)\varepsilon_k$.

\smallskip
\noindent\emph{Step 1: The Sets $\C_k$ Are Closed and Convex and Contain $F$.}
The assertion holds at $k=1$ because $\C_1=\E$ is nonempty, closed, and convex, and $F\subset\C_1$. Suppose inductively that $\C_k$ is nonempty, closed, and convex and that $F\subset\C_k$. For $v\in\C_k$, expansion of the two Bregman distances gives
\begin{align*}
 D_f(v,y_k)-D_f(v,w_k)
 &=\langle v,\nabla f(w_k)-\nabla f(y_k)\rangle\\
 &\quad+f(w_k)-f(y_k)
 +\langle y_k,\nabla f(y_k)\rangle
 -\langle w_k,\nabla f(w_k)\rangle.
\end{align*}
Hence, setting
\[
 d_k=\delta_k+f(y_k)-f(w_k)
 +\langle w_k,\nabla f(w_k)\rangle
 -\langle y_k,\nabla f(y_k)\rangle,
\]
we obtain
\begin{equation}\label{eq:bregman-halfspace}
 \C_{k+1}
 =\{v\in\C_k:\langle v,\nabla f(w_k)-\nabla f(y_k)\rangle\leq d_k\}.
\end{equation}
Thus $\C_{k+1}$ is closed and convex.

Since $\nabla f^*=(\nabla f)^{-1}$, the definition of $y_k$ gives
\[
 \nabla f(y_k)
 =\alpha_k\nabla f(w_k)+(1-\alpha_k)\nabla f(T_kw_k).
\]
Let $p\in F$. Using \eqref{eq:V-identity}, convexity of $V_f(p,\cdot)$, and \eqref{eq:bregman-error},
\begin{align*}
 D_f(p,y_k)
 &\leq \alpha_kD_f(p,w_k)+(1-\alpha_k)D_f(p,T_kw_k)\\
 &\leq D_f(p,w_k)+\delta_k.
\end{align*}
Therefore $p\in\C_{k+1}$. Induction shows that $\C_k$ is nonempty, closed, and convex for every $k\geq1$.

\smallskip
\noindent\emph{Step 2: Boundedness and the Nested-Set Estimate.}
Fix $p\in F$. The Bregman projection inequality gives
\begin{equation}\label{eq:bregman-bounded}
 D_f(p,v_k)+D_f(v_k,v_0)\leq D_f(p,v_0).
\end{equation}
The map $z\mapsto D_f(z,v_0)=f(z)-\langle z,\nabla f(v_0)\rangle+f^*(\nabla f(v_0))$ is coercive. Hence the boundedness of $\{D_f(v_k,v_0)\}$ in \eqref{eq:bregman-bounded} implies that $\{v_k\}$ is bounded. Since $v_{k+1}\in\C_{k+1}\subset\C_k$,
\[
 D_f(v_{k+1},v_k)+D_f(v_k,v_0)\leq D_f(v_{k+1},v_0).
\]
Thus $\{D_f(v_k,v_0)\}$ is nondecreasing and bounded. If $m>n$, then $v_m\in\C_n$, and
\begin{equation}\label{eq:bregman-nested}
 D_f(v_m,v_n)
 \leq D_f(v_m,v_0)-D_f(v_n,v_0).
\end{equation}

\smallskip
\noindent\emph{Step 3: Convergence and Residual Decay.}
Let $a_k=D_f(v_k,v_0)$. The preceding argument shows that $a_k$ converges. If $\{v_k\}$ were not Cauchy, there would exist $\rho>0$ and indices $m_j>n_j\to\infty$ such that $\|v_{m_j}-v_{n_j}\|\geq\rho$. By \eqref{eq:bregman-nested},
\[
 0\leq D_f(v_{m_j},v_{n_j})\leq a_{m_j}-a_{n_j}\longrightarrow0.
\]
Both subsequences are bounded, so \eqref{eq:bregman-consistency} gives $\|v_{m_j}-v_{n_j}\|\to0$, a contradiction. Thus $\{v_k\}$ is Cauchy. Let $v_k\to q$. For each fixed $n$, $v_k\in\C_k\subset\C_n$ for all $k\geq n$. Since $\C_n$ is closed, $q\in\C_n$, and hence $q\in\bigcap_n\C_n$. Boundedness of $\{\beta_k\}$ yields $w_k\to q$.

We next show that the auxiliary sequences are bounded. Since $\{w_k\}$ is bounded and both $f$ and $\nabla f$ are bounded on bounded sets, the identity
\[
 D_f(p,w_k)=f(p)-f(w_k)-\langle p-w_k,\nabla f(w_k)\rangle,
\]
shows that $\{D_f(p,w_k)\}$ is bounded. Equation \eqref{eq:bregman-error}, together with boundedness of the convergent sequence $\{\varepsilon_k\}$, then shows that $\{D_f(p,T_kw_k)\}$ is bounded. The right-sublevel assumption in Definition~\ref{def:admissible-bregman} implies that $\{T_kw_k\}$ is bounded. Consequently, $\{\nabla f(w_k)\}$ and $\{\nabla f(T_kw_k)\}$ are bounded in $\E^*$. Their convex combinations are bounded, and boundedness of $\nabla f^*$ on bounded subsets of $\E^*$ gives boundedness of $\{y_k\}$.

Since $v_{k+1}-w_k\to0$, \eqref{eq:bregman-consistency} gives $D_f(v_{k+1},w_k)\to0$. Since $v_{k+1}\in\C_{k+1}$, the inequality defining $\C_{k+1}$ in \eqref{eq:bregman-algorithm} gives
\[
 0\leq D_f(v_{k+1},y_k)
 \leq D_f(v_{k+1},w_k)+\delta_k\to0.
\]
Hence $\|v_{k+1}-y_k\|\to0$, and therefore $\|y_k-w_k\|\to0$. Uniform continuity of $\nabla f$ on bounded sets yields
\[
 \|\nabla f(y_k)-\nabla f(w_k)\|\to0.
\]
By the definition of $y_k$,
\[
 \nabla f(y_k)-\nabla f(w_k)
 =(1-\alpha_k)\bigl(\nabla f(T_kw_k)-\nabla f(w_k)\bigr).
\]
Since $1-\alpha_k\geq1-\overline\alpha>0$, it follows that
\[
 \|\nabla f(T_kw_k)-\nabla f(w_k)\|\to0.
\]
The two gradient sequences are bounded. Hence, by the uniform norm-to-norm continuity of $\nabla f^*$ on bounded subsets of $\E^*$,
\begin{align*}
 \|T_kw_k-w_k\|
 &=\bigl\|\nabla f^*(\nabla f(T_kw_k))
 -\nabla f^*(\nabla f(w_k))\bigr\|\longrightarrow0.
\end{align*}
Thus
\begin{equation}\label{eq:bregman-residual}
 \|T_kw_k-w_k\|\to0.
\end{equation}

\smallskip
\noindent\emph{Step 4: Membership in $F$.}
By the NST-condition and \eqref{eq:bregman-residual},
\[
 \|Tw_k-w_k\|\to0,
 \qquad T\in\Gamma.
\]
Since $w_k\to q$ and $\|Tw_k-w_k\|\to0$, one has $Tw_k\to q$. Closedness of the graph of $T$ therefore gives $Tq=q$. Thus $q\in F$.

\smallskip
\noindent\emph{Step 5: Identification of the Limit.}
From \eqref{eq:bregman-bounded},
\[
 D_f(v_k,v_0)\leq D_f(p,v_0),
 \qquad p\in F.
\]
Since $z\mapsto D_f(z,v_0)$ is continuous and $v_k\to q$,
\[
 D_f(q,v_0)=\lim_{k\to\infty}D_f(v_k,v_0)\leq D_f(p,v_0),\qquad p\in F.
\]
Thus $q$ is the unique minimizer of $z\mapsto D_f(z,v_0)$ over $F$, and hence $q=P_F^fv_0$.
\end{proof}

\begin{remark}[Why the Two Theorems Use Different Defining Inequalities]\label{rem:orientation}
For $\psi(x)=\frac12\|x\|^2$ on a smooth Banach space,
\[
 D_\psi(x,y)=\frac12\phi(x,y).
\]
Nevertheless, Theorem~\ref{thm:main} minimizes $D_\psi(v_0,\cdot)$,
whereas the standard Bregman projection minimizes $D_f(\cdot,v_0)$. The
asymmetry is structural. In particular,
\[
 D_f(a,v)-D_f(b,v)=f(a)-f(b)+\langle b-a,\nabla f(v)\rangle,
\]
so the set obtained by placing the variable in the second argument need not be convex. The defining inequality for $\C_{k+1}$ in \eqref{eq:bregman-algorithm} is affine in $v$, as shown by \eqref{eq:bregman-halfspace}.
\end{remark}

\section{Illustrative Examples}

This section illustrates the two convergence theorems in three settings. The first example constructs a nonzero vanishing-error family on the real line to which Theorem~\ref{thm:main} applies. The second verifies the assumptions of Definition~\ref{def:admissible-bregman} for a power-type Legendre generator. The third uses the corresponding Bregman projections to construct an averaged operator covered by Theorem~\ref{thm:bregman}. Power-type Legendre generators and their Bregman projections are standard in Bregman convex analysis and feasibility theory; see \cite{BauschkeBorwein1997,ButnariuIusem2000,ButnariuIusemZalinescu2003}. Averaged Bregman projection constructions are classical in convex feasibility; see \cite{ButnariuCensorReich1997,Naraghirad2020}.

\subsection{A Nonzero Vanishing-Error Family on the Real Line}

\begin{proposition}\label{prop:scalar-example}
Let $\E=\R$, let $\eta_k>0$ be pairwise distinct with $\eta_k\to0$, and define
\[
 Sx=\frac{x}{1+|x|},
 \qquad
 T_kx=(1+\eta_k)Sx.
\]
Set $\Gamma=\{S\}$. Then the operator-family assumptions of
Theorem~\ref{thm:main} are satisfied with
\[
 F(\Gamma)=\{0\},
 \qquad
 \varepsilon_k=2\eta_k+\eta_k^2.
\]
Consequently, for every choice of parameters satisfying
\[
 0\leq\alpha_k\leq\alpha<1
 \quad\text{and}\quad
 \sup_k|\beta_k|<\infty,
\]
the sequence generated by \eqref{eq:main-algorithm} converges strongly to $0$.
\end{proposition}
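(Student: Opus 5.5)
The plan is to check the hypotheses of Theorem~\ref{thm:main} one at a time and then read off the conclusion. First I record the setting. On $\E=\R$ the duality map is $J=\Id$. Hence $\phi(x,y)=x^2-2xy+y^2=(x-y)^2$, and $\R$ is trivially uniformly smooth and uniformly convex.

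Next I compute the fixed-point sets.
\begin{itemize}
\item For $S$: the equation $x=x/(1+|x|)$ forces $x=0$, so $\Fix(S)=\{0\}$ and $F(\Gamma)=\{0\}$.
\item For $T_k$: the equation $x=(1+\eta_k)x/(1+|x|)$ gives $x=0$ or $|x|=\eta_k$, so $\Fix(T_k)=\{0,\eta_k,-\eta_k\}$.
\item Because the $\eta_k$ are pairwise distinct and positive, no nonzero point lies in two of these sets. Hence $\bigcap_k\Fix(T_k)=\{0\}=F(\Gamma)\neq\varnothing$.
\end{itemize}
This intersection step is the only place where distinctness is used, and I expect it to be the one point needing care.

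Then I verify the properties of $\Gamma=\{S\}$. The map $S$ is continuous, so its graph is closed. Since $|Sx|\le|x|$, we get $\phi(Sx,0)=(Sx)^2\le x^2=\phi(x,0)$. So $S$ is generalized nonexpansive in the sense of Ibaraki--Takahashi.

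For the vanishing-error $\phi$-Fej\'er property with $p=0$, I use $|Sx|<1$:
\[
\phi(T_kx,0)=(1+\eta_k)^2(Sx)^2=(Sx)^2+(2\eta_k+\eta_k^2)(Sx)^2\le x^2+\varepsilon_k .
\]
Here $\varepsilon_k=2\eta_k+\eta_k^2\ge0$ and $\varepsilon_k\to0$.

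For the NST-condition, let $\{x_k\}$ be bounded with $|x_k-T_kx_k|\to0$. Since $T_kx_k=Sx_k+\eta_kSx_k$,
\[
|x_k-Sx_k|\le|x_k-T_kx_k|+\eta_k|Sx_k|\le|x_k-T_kx_k|+\eta_k\to0 .
\]
This holds for the only member $S$ of $\Gamma$. (Boundedness of $\{x_k\}$ is not even needed here.)

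Finally, all hypotheses of Theorem~\ref{thm:main} hold for any $0\le\alpha_k\le\alpha<1$ and bounded $\{\beta_k\}$. Therefore $v_k\to R_{\{0\}}v_0$. By Lemma~\ref{lem:retraction-facts}, $R_{\{0\}}v_0$ is the unique minimizer of $z\mapsto\phi(v_0,z)$ over $\{0\}$, which is $0$. This proves the claim.
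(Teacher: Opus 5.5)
Your proof is correct and follows the paper's argument essentially step for step. The shared steps are: the fixed-point computation $\Fix(T_k)=\{0,\pm\eta_k\}$ combined with distinctness of the $\eta_k$; the bound $|x_k-Sx_k|\le|x_k-T_kx_k|+\eta_k$ for the NST-condition; and the estimate $(1+\eta_k)^2(Sx)^2\le x^2+\varepsilon_k$ using $|Sx|\le\min\{1,|x|\}$. Your explicit identification of $R_{\{0\}}v_0=0$ via Lemma~\ref{lem:retraction-facts}, and your check that $\R$ is uniformly smooth and uniformly convex, simply make precise steps the paper leaves implicit.
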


\begin{proof}
Here $J=\Id$ and $\phi(x,y)=|x-y|^2$. The mapping $S$ is continuous,
$\Fix(S)=\{0\}$, and $|Sx|\leq|x|$, so $S$ is generalized nonexpansive.
Solving $T_kx=x$ gives
\[
 \Fix(T_k)=\{0,-\eta_k,\eta_k\}.
\]
The pairwise distinctness of $\eta_k$ therefore yields
\[
 \bigcap_{k=1}^{\infty}\Fix(T_k)=\{0\}=\Fix(S).
\]
Moreover,
\[
 |T_kx-Sx|=\eta_k|Sx|\leq\eta_k.
\]
Thus, for every sequence $\{x_k\}$,
\[
 |x_k-Sx_k|\leq|x_k-T_kx_k|+\eta_k,
\]
which proves the NST-condition with $\Gamma$.

Finally, $|Sx|\leq1$ and $|Sx|\leq|x|$, so
\begin{align*}
 |T_kx|^2
 &=(1+\eta_k)^2|Sx|^2\\
 &=|Sx|^2+(2\eta_k+\eta_k^2)|Sx|^2\\
 &\leq |x|^2+(2\eta_k+\eta_k^2).
\end{align*}
This is \eqref{eq:phi-error} with $F=\{0\}$. Since
$\varepsilon_k\to0$, Theorem~\ref{thm:main} applies. In particular, choosing
$\eta_k=1/k$ gives
\[
 \varepsilon_k=\frac{2}{k}+\frac{1}{k^2}\longrightarrow0,
 \qquad
 \sum_{k=1}^{\infty}\varepsilon_k=\infty.
\]
\end{proof}

\subsection{A Power-Type Bregman Generator}

Let $d,m\in\N$, let $1<r<\infty$, and set
$s=r/(r-1)$. Equip $\R^d$ with the $\ell^r$ norm and its standard duality
with $\ell^s$. Define
\begin{equation}\label{eq:power-generator}
 f_r(x)=\frac1r\sum_{j=1}^{d}|x_j|^r.
\end{equation}
Then
\[
 (\nabla f_r(x))_j=|x_j|^{r-2}x_j,
 \qquad
 (\nabla f_r^*(x^*))_j=|x_j^*|^{s-2}x_j^*.
\]

\begin{lemma}\label{lem:power-generator}
The function $f_r$ in \eqref{eq:power-generator} is an admissible Bregman
generator.
\end{lemma}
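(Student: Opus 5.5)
The plan is to verify the four items of Definition~\ref{def:admissible-bregman} one at a time. I will use two facts throughout: $\E=\R^d$ is finite-dimensional, so it is reflexive and its closed bounded sets are compact; and $f_r$ is separable, $f_r(x)=\sum_j g(x_j)$ with $g(t)=|t|^r/r$. A direct computation gives the conjugate $f_r^*(x^*)=\frac1s\sum_j|x_j^*|^s=:f_s(x^*)$, which is finite and differentiable on all of $\R^d$. In particular, the two gradient formulas displayed before the lemma are mutually inverse.

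For item (1), $g$ is $C^1$ and strictly convex on $\R$, so $f_r$ is differentiable everywhere and strictly convex on $\operatorname{dom}f_r=\R^d$. The same holds for $f_r^*=f_s$. A convex function that is finite and differentiable on all of $\R^d$ is essentially smooth. A strictly convex function with full domain is essentially strictly convex. Hence $f_r$ is Legendre. Strong coercivity follows from $f_r(x)=\frac1r\|x\|_r^r$ with $r>1$, which gives $f_r(x)/\|x\|_r\to\infty$. For item (2), $f_r$ is continuous, hence bounded on bounded sets. The gradient $\nabla f_r$ is continuous on $\R^d$, hence uniformly continuous on each closed ball $B$. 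The estimate
\[
|f_r(x+h)-f_r(x)-\langle h,\nabla f_r(x)\rangle|\le\|h\|\sup_{t\in[0,1]}\|\nabla f_r(x+th)-\nabla f_r(x)\|
\]
then yields Fr\'echet differentiability that is uniform in $x\in B$.

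For item (3) I will avoid second derivatives. The reason is that $g''(t)=(r-1)|t|^{r-2}$ vanishes at $0$ when $r>2$, so a pointwise curvature bound is unavailable. Instead I will use compactness. For a closed ball $B$ and $t>0$, define
\[
\delta_B(t)=\inf\Bigl\{\tfrac12 f_r(x)+\tfrac12 f_r(y)-f_r\bigl(\tfrac{x+y}2\bigr):x,y\in B,\ \|x-y\|=t\Bigr\}.
\]
The admissible pairs form a compact set and the expression is continuous. The infimum is therefore attained, and it is strictly positive by strict convexity. This gives uniform convexity on bounded sets in the midpoint form. If the cited references use the version with a general parameter $\lambda\in(0,1)$, I will apply the same compactness argument over $(x,y)$ with $\lambda$ fixed, or invoke the standard equivalence of the two forms. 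I expect this step to be the main point needing care, precisely because the naive Hessian bound fails.

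For item (4), I will use the identity \eqref{eq:V-identity} to write
\[
D_f(x,y)=f_r(x)-\langle x,\nabla f_r(y)\rangle+f_s(\nabla f_r(y)).
\]
Setting $y^*=\nabla f_r(y)$, this gives
\[
D_f(x,y)\ge f_r(x)-\|x\|_r\|y^*\|_s+\tfrac1s\|y^*\|_s^s.
\]
The right-hand side tends to $\infty$ as $\|y^*\|_s\to\infty$ because $s>1$. Moreover, $\|y^*\|_s=\|y\|_r^{r-1}$, which follows from $|y_j|^{(r-1)s}=|y_j|^r$. Hence, for fixed $x$ and $r>0$, the sublevel set $\{y:D_f(x,y)\le r\}$ forces $\|y^*\|_s$, and therefore $\|y\|_r$, to be bounded. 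This completes the verification.
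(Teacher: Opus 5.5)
Your proof is correct. For item (4), the only item the paper actually proves, your argument is the paper's. The paper uses Fenchel's identity $D_{f_r}(x,y)=\frac1r\|x\|_r^r+\frac1s\|y\|_r^r-\langle x,\nabla f_r(y)\rangle$, then H\"older's inequality together with $\|\nabla f_r(y)\|_s=\|y\|_r^{r-1}$. You run the same chain and only phrase the growth in the dual variable $y^*$.

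Where you differ is items (1)--(3): the paper simply cites the standard literature, while you verify them directly from finite-dimensionality.
\begin{itemize}
\item For the Legendre property you use the full domains of $f_r$ and of $f_r^*=f_s$.
\item For uniform Fr\'echet differentiability you use uniform continuity of $\nabla f_r$ on compact balls.
\item For uniform convexity you use a compactness argument on the midpoint modulus, which rightly avoids the degenerate Hessian at $0$ when $r>2$.
\end{itemize}
This makes the lemma self-contained at little cost.

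One point in (3) needs tightening. The references the paper cites (e.g.\ Butnariu--Iusem--Z\u{a}linescu) define the modulus with the normalization $\lambda(1-\lambda)$ and take the infimum over all $\lambda\in(0,1)$. Compactness at a fixed $\lambda$ therefore gives a $\lambda$-dependent bound and does not suffice. Your fallback (equivalence with the midpoint form) does suffice, by a short argument:
\begin{itemize}
\item For $\lambda\le\frac12$, write $\lambda x+(1-\lambda)y=2\lambda\cdot\frac{x+y}{2}+(1-2\lambda)y$ and apply convexity. This shows the convexity gap is at least $2\lambda\,\delta_B(\|x-y\|)$.
\item For $\lambda\ge\frac12$, the symmetric decomposition gives a gap of at least $2(1-\lambda)\,\delta_B(\|x-y\|)$.
\end{itemize}
Since $2\min\{\lambda,1-\lambda\}\ge 2\lambda(1-\lambda)$, the normalized modulus is bounded below by $2\delta_B$. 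This works because the ball $B$ is convex, so the midpoint stays in $B$.
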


\begin{proof}
The Legendre, strong-coercivity, smoothness, and uniform-convexity properties
are standard for the finite-dimensional power function; see
\cite{BauschkeBorwein1997,ButnariuIusem2000,
ButnariuIusemZalinescu2003}. It remains only to give the right-sublevel
condition used in Definition~\ref{def:admissible-bregman}. Fenchel's identity
gives
\[
 D_{f_r}(x,y)=\frac1r\|x\|_r^r+\frac1s\|y\|_r^r
 -\langle x,\nabla f_r(y)\rangle.
\]
Since $\|\nabla f_r(y)\|_s=\|y\|_r^{r-1}$, H\"older's inequality yields
\[
 D_{f_r}(x,y)\geq
 \frac1s\|y\|_r^r-\|x\|_r\|y\|_r^{r-1}
 +\frac1r\|x\|_r^r.
\]
For fixed $x$, the right-hand side tends to $+\infty$ as
$\|y\|_r\to\infty$. Hence every right Bregman sublevel set
$\{y:D_{f_r}(x,y)\leq a\}$ is bounded, and $f_r$ is admissible.
\end{proof}

\subsection{An Averaged Bregman Projection Operator}

Let
$\C_1,\ldots,\C_m\subset\R^d$ be nonempty closed convex sets with
\[
 \C:=\bigcap_{i=1}^{m}\C_i\neq\varnothing.
\]
Let $\lambda_i>0$ and $\sum_{i=1}^{m}\lambda_i=1$. Write
$P_i=P_{\C_i}^{f_r}$ and define
\[
 Tx=\nabla f_r^*\!\left(\sum_{i=1}^{m}\lambda_i\nabla f_r(P_i x)\right).
\]

\begin{proposition}\label{prop:averaged-bregman}
The mapping $T$ is continuous,
\[
 \Fix(T)=\C,
\]
and, for every $p\in\C$,
\begin{equation}\label{eq:strict-bregman-fejer}
 D_{f_r}(p,Tx)
 \leq D_{f_r}(p,x)-\sum_{i=1}^{m}\lambda_iD_{f_r}(P_i x,x).
\end{equation}
Consequently, Theorem~\ref{thm:bregman} applies with $T_k=T$,
$\Gamma=\{T\}$, and $\varepsilon_k=0$, and its iterates converge to
$P_{\C}^{f_r}v_0$.
\end{proposition}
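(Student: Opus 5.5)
The plan is to prove the Fejér inequality \eqref{eq:strict-bregman-fejer} first, and then read off $\Fix(T)=\C$ and the hypotheses of Theorem~\ref{thm:bregman} from it.

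\emph{Continuity.} On $\R^d$, $\nabla f_r$ and $\nabla f_r^*$ are continuous. Each Bregman projection $P_i$ is continuous, which is standard for Legendre generators in finite dimensions; alternatively, it follows from the three-point inequality of Lemma~\ref{lem:bregman-facts}(2) together with a compactness argument. Hence $T$ is a composition of continuous maps.

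\emph{Fejér inequality.} Fix $p\in\C$. By \eqref{eq:V-identity}, $D_{f_r}(p,Tx)=V_{f_r}\bigl(p,\sum_i\lambda_i\nabla f_r(P_ix)\bigr)$. Convexity of $V_{f_r}(p,\cdot)$ then gives
\[
D_{f_r}(p,Tx)\leq\sum_i\lambda_iD_{f_r}(p,P_ix).
\]
Since $p\in\C_i$, Lemma~\ref{lem:bregman-facts}(2) gives $D_{f_r}(p,P_ix)\leq D_{f_r}(p,x)-D_{f_r}(P_ix,x)$. Averaging over $i$ yields \eqref{eq:strict-bregman-fejer}.

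\emph{Fixed points.} If $x\in\C$, then $P_ix=x$ for every $i$, so $Tx=\nabla f_r^*(\nabla f_r(x))=x$. Conversely, suppose $Tx=x$ and choose any $p\in\C$, which exists since $\C\neq\varnothing$. Then \eqref{eq:strict-bregman-fejer} gives $\sum_i\lambda_iD_{f_r}(P_ix,x)\leq0$. Since $\lambda_i>0$, each $D_{f_r}(P_ix,x)=0$. Strict convexity of $f_r$ (Legendre) then gives $P_ix=x$, so $x\in\C_i$ for all $i$, i.e.\ $x\in\C$.

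\emph{Applying the theorem.} We take $T_k=T$ and $\Gamma=\{T\}$.
\begin{itemize}
\item $F(\Gamma)=\Fix(T)=\C$, which is closed and convex, and $\bigcap_k\Fix(T_k)=\C$.
\item $T$ is continuous, so its graph is closed and $T$ is a closed mapping.
\item The NST-condition is trivial, since $T_k=T$ is the only member of $\Gamma$.
\item \eqref{eq:bregman-error} holds with $\varepsilon_k=0$ by \eqref{eq:strict-bregman-fejer}.
\item Lemma~\ref{lem:power-generator} shows $f_r$ is admissible.
\end{itemize}
Theorem~\ref{thm:bregman} therefore gives $v_k\to P^{f_r}_{\C}v_0$.

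The only step that is not routine is the continuity of $P_i$. If it cannot be cited directly, I would argue as follows. Take $x_n\to x$. The points $P_ix_n$ are bounded, because $D_{f_r}(z,P_ix_n)\leq D_{f_r}(z,x_n)$ for a fixed $z\in\C_i$ and right sublevel sets are bounded. Along a convergent subsequence $P_ix_n\to u$, pass to the limit in the variational inequality $D_{f_r}(P_ix_n,x_n)\leq D_{f_r}(z,x_n)$ for all $z\in\C_i$. This shows $u$ minimizes $D_{f_r}(\cdot,x)$ over $\C_i$, so $u=P_ix$ by uniqueness, and the whole sequence converges to $P_ix$.
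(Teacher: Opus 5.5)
Your proposal is correct and follows essentially the same route as the paper: the Fej\'er inequality from convexity of $V_{f_r}(p,\cdot)$ plus the projection inequality, $\Fix(T)=\C$ via strict convexity, and continuity of each $P_i$ by a boundedness-plus-uniqueness subsequence argument. The one small difference is how you bound $\{P_ix_n\}$: you use $D_{f_r}(z,P_ix_n)\leq D_{f_r}(z,x_n)$ together with boundedness of right sublevel sets from Lemma~\ref{lem:power-generator}, whereas the paper uses the minimizing inequality $D_{f_r}(P_ix_n,x_n)\leq D_{f_r}(a,x_n)$ and the uniform coercivity bound $D_{f_r}(z,x_n)\geq\frac1r\|z\|_r^r-M\|z\|_r$; both arguments work.
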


\begin{proof}
We first verify continuity. Fix $i$, let $x_n\to x$, and set
$z_n=P_i x_n$. Choose $a\in\C_i$. By the minimizing property of $P_i$,
\[
 D_{f_r}(z_n,x_n)\leq D_{f_r}(a,x_n).
\]
The right-hand side is bounded. Moreover, with
$M:=\sup_n\|\nabla f_r(x_n)\|_s<\infty$, one has
\[
 D_{f_r}(z,x_n)
 \geq \frac1r\|z\|_r^r-M\|z\|_r,
\]
so the left-hand sublevel sets are bounded uniformly in $n$. Hence
$\{z_n\}$ is bounded. If a subsequence $z_{n_j}\to z$, then
$z\in\C_i$, and continuity of $D_{f_r}$ gives, for every $y\in\C_i$,
\[
 D_{f_r}(z,x)
 =\lim_{j\to\infty}D_{f_r}(z_{n_j},x_{n_j})
 \leq\lim_{j\to\infty}D_{f_r}(y,x_{n_j})
 =D_{f_r}(y,x).
\]
Uniqueness of the Bregman projection yields $z=P_i x$. Thus every
subsequence has a further subsequence converging to $P_i x$, and therefore
$P_i x_n\to P_i x$. Hence every $P_i$, and consequently $T$, is continuous.

Let $p\in\C$. By convexity of $V_{f_r}(p,\cdot)$ and the Bregman
projection inequality,
\begin{align*}
 D_{f_r}(p,Tx)
 &\leq\sum_{i=1}^m\lambda_iD_{f_r}(p,P_i x)\\
 &\leq D_{f_r}(p,x)
 -\sum_{i=1}^m\lambda_iD_{f_r}(P_i x,x),
\end{align*}
which proves \eqref{eq:strict-bregman-fejer}. If $x\in\C$, then
$P_i x=x$ for every $i$, and hence $Tx=x$. Conversely, if $Tx=x$, then
\eqref{eq:strict-bregman-fejer} gives
\[
 \sum_{i=1}^m\lambda_iD_{f_r}(P_i x,x)=0.
\]
Every term is nonnegative and every $\lambda_i$ is positive, so
$D_{f_r}(P_i x,x)=0$ for all $i$. Strict convexity of $f_r$ implies
$P_i x=x$, and therefore $x\in\C$. Thus $\Fix(T)=\C$.

Since $T$ is continuous, it is closed. For the constant sequence $T_k=T$,
the NST-condition with $\Gamma=\{T\}$ is immediate, and
\eqref{eq:strict-bregman-fejer} implies the vanishing-error condition with
$\varepsilon_k=0$. Therefore Theorem~\ref{thm:bregman} applies.
\end{proof}

\section{Conclusions}

We established strong convergence of an inertial shrinking-retraction method for a countable family of mappings satisfying the NST-condition and a vanishing-error $\phi$-Fej\'er inequality, without requiring the error sequence to be summable. We further obtained a Bregman-projection analogue and illustrated both results with explicit examples. In the two settings, the generated sequence converges strongly to the retraction and the Bregman projection, respectively, of the initial point onto the common fixed-point set.

\end{document}